\newtheorem{theo}{Theorem}[section]
\newtheorem{coro}[theo]{Corollary}
\theoremstyle{definition}
\newtheorem*{ackn}{Acknowledgments}
\newtheorem{rema}[theo]{Remark}
\newcommand{\Spec}{\mathop{\mathrm{Spec}}\nolimits}
\newcommand{\tdeg}{\mathop{\mathrm{tr.deg}}\nolimits}
\newcommand{\Mat}{\mathop{\mathrm{Mat}}\nolimits}
\newcommand{\GL}{\mathop{\mathrm{GL}}\nolimits}
\newcommand{\Lie}{\mathop{\mathrm{Lie}}\nolimits}
\newcommand{\Aut}{\mathop{\mathrm{Aut}}\nolimits}
\newcommand{\Gal}{\mathop{\mathrm{Gal}}\nolimits}
\newcommand{\mbf}{\mathbf{f}}
\newcommand{\mbm}{\mathbf{m}}
\newcommand{\mcZb}{\overline{\mathcal{Z}}}
\newcommand{\Z}{\mathbb{Z}}
\newcommand{\Q}{\mathbb{Q}}
\newcommand{\LL}{\mathbb{L}}
\newcommand{\T}{\mathbb{T}}
\newcommand{\E}{\mathbb{E}}
\newcommand{\Fq}{\mathbb{F}_{q}}
\newcommand{\Gm}{\mathop{\mathbb{G}_{m}}\nolimits}
\newcommand{\Ga}{\mathop{\mathbb{G}_{a}}\nolimits}
\newcommand{\kinf}{k_{\infty}}
\newcommand{\kinfb}{\overline{k_{\infty}}}
\newcommand{\cinf}{\mathbb{C}_{\infty}}
\newcommand{\kb}{\overline{k}}
\newcommand{\kbt}{\overline{k}^{\times}}
\newcommand{\Fb}{\overline{F}}
\newcommand{\Fs}{F^{\mathrm{sep}}}
\newcommand{\lan}{L_{\alpha,n}}
\newcommand{\laln}{L_{\alpha_{1},n}}
\newcommand{\larn}{L_{\alpha_{r},n}}
\newcommand{\aval}{|\alpha|_{\infty}}
\newcommand{\ajval}{|\alpha_{j}|_{\infty}}
\newcommand{\aub}{\underline{\alpha}}
\newcommand{\nub}{\underline{n}}
\newcommand{\zn}{\zeta(n)}
\newcommand{\znn}{\zeta(n,n)}
\newcommand{\zQ}{\zeta_{\mathbb{Q}}}
\newcommand{\laan}{L_{\alpha,\alpha,n,n}}
\newcommand{\az}{a_{0}}
\newcommand{\azt}{\widetilde{a_{0}}}
\newcommand{\cz}{c_{0}}
\newcommand{\vp}{\varphi}
\newcommand{\pit}{\widetilde{\pi}}
\newcommand{\Psit}{\widetilde{\Psi}}
\newcommand{\GaM}{\Gamma_{M}}
\newcommand{\GaP}{\Gamma_{\Psi}}
\newcommand{\nq}{|\theta|_{\infty}^{\frac{nq}{q-1}}}
\newcommand{\Ker}{\mathop{\mathrm{Ker}}\nolimits}
\newcommand{\Cn}{C^{\otimes n}}
\title{Algebraic independence of the Carlitz period and the positive characteristic multizeta values at $n$ and $(n,n)$}
\author{Yoshinori Mishiba\thanks{
Graduate School of Mathematics, Kyushu University, 744, Motooka, Nishi-ku, Fukuoka, 819-0395, JAPAN.
\endgraf e-mail: \texttt{y-mishiba@math.kyushu-u.ac.jp}
\endgraf Supported by the JSPS Research Fellowships for Young Scientists.
}}
\date{}
\begin{document}
\maketitle

\begin{abstract}
Let $k$ be the rational function field over the finite field of $q$ elements and $\kb$ its fixed algebraic closure.
In this paper, we study algebraic relations over $\kb$ among the fundamental period $\pit$ of the Carlitz module
and the positive characteristic multizeta values $\zn$ and $\znn$ for an ``odd'' integer $n$,
where we say that $n$ is ``odd'' if $q-1$ does not divide $n$.
We prove that either they are algebraically independent over $\kb$ or satisfy some simple relation over $k$.
We also prove that if $2n$ is ``odd'' then they are algebraically independent over $\kb$.
\end{abstract}

\section{Introduction}
Let $k := \Fq(\theta)$ be the rational function field over the finite field of $q$ elements with variable $\theta$,
$p$ the characteristic of $k$,
$\kinf := \Fq(\!(\theta^{-1})\!)$ the $\infty$-adic completion of $k$,
$\kinfb$ a fixed algebraic closure of $\kinf$,
and $\kb$ the algebraic closure of $k$ in $\kinfb$.
Thakur (\cite{Tha1}) defined the positive characteristic multizeta values (MZV) by
$$\zeta(n_{1}, \dots, n_{d}) := \sum \frac{1}{a_{1}^{n_{1}} \cdots a_{d}^{n_{d}}} \in \kinf$$
for integers $n_{1}, \dots, n_{d} \geq 1$, where the sum is over all monic polynomials $a_{i}$ in $\Fq[\theta]$ such that
$\deg a_{1} > \cdots > \deg a_{d} \geq 0$.
Throughout this paper, we use the notation $\zeta$ for the above means,
and we use the notation $\zQ$ for the classical multiple zeta values in characteristic zero.
The sum $\sum_{i} n_{i}$ is called the weight and $d$ is called the depth of the MZV $\zeta(n_{1}, \dots, n_{d})$.
We fix a ($q-1$)-st root of $-\theta$ and set
$$\pit := (-\theta)^{\frac{q}{q-1}} \prod_{i=1}^{\infty}\left(1-\theta^{1-q^i}\right)^{-1} \in (-\theta)^{\frac{1}{q-1}} \cdot \kinf^{\times}$$
the fundamental period of the Carlitz module.
Since $\# \Fq[\theta]^{\times} = q-1$, we say that an integer $n$ is ``odd'' if $q-1$ does not divide $n$, and ``even'' if $q-1$ divides $n$.

It is clear that $\zeta(p n) = \zn^{p}$ for all $n \geq 1$.
Carlitz (\cite{Carl}) showed that $\zn / \pit^{n} \in k^{\times}$ for each ``even'' integer $n \geq 1$.
Chang and Yu (\cite{ChYu}) proved that the all algebraic relations over $\kb$ among $\pit$ and MZVs of depth one come from the above types.
Thus we are interested in the case where the depth is greater than $1$.
In \cite{Chan}, Chang showed that the all algebraic relations over $\kb$ among MZVs are $k$-homogeneous.
However he did not treat linear relations over $\kb$ among MZVs of same weights.
In this paper, we prove the following theorem:

\begin{theo}\label{main}
Let $n \geq 1$ be an ``odd'' integer.
Then $\pit$, $\zn$ and $\znn$ are algebraically independent over $\kb$,
or $\zn^{2} - 2 \znn \in \pit^{2n} \cdot k^{\times}$.
If $2n$ is ``odd'', then we have the former case.
\end{theo}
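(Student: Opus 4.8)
The plan is to realize the three numbers as entries of a single period matrix and to compute the associated Tannakian (difference) Galois group, so that the transcendence degree of $\kb(\pit,\zn,\znn)$ over $\kb$ becomes the dimension of that group. Following Anderson--Thakur and Chang, I would attach to the pair $\{\zn,\znn\}$ the standard iterated-extension dual $t$-motive: its defining matrix $\Phi$ is block lower triangular of size $3$, with diagonal entries $(t-\theta)^{2n}$, $(t-\theta)^{n}$, $1$ corresponding to the Carlitz tensor powers $C^{\otimes 2n}$, $\Cn$, $\mathbf 1$, and a rigid analytic trivialization $\Psit$ whose specialization $\Psit(\theta)$ has entries generating $\kb(\pit,\zn,\znn)$ over $\kb$ (here $\pit$ enters through the Anderson--Thakur function $\Omega$, with $\Omega(\theta)^{-1}\in\pit\cdot\kbt$). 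Since $\pit$ is the period of the Carlitz motive and $\Cn$, $C^{\otimes 2n}$ are its tensor powers, the reductive quotient of $\GaP$ is the single torus $\Gm$ attached to $\pit$, and $\GaP$ embeds into the group $\mathbf G$ of matrices $\mathrm{diag}(a^{2n},a^{n},1)$ times a unipotent part supported in positions $(2,1),(3,1),(3,2)$.

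The first essential reduction is that $\mathbf G$ is only three-dimensional. The two rank-two sub-quotients, the $(1,2)$-block and the $(2,3)$-block, are both extensions of $\mathbf 1$ by $\Cn$ whose period is the same value $\zn$ (this is exactly where the hypothesis $s_1=s_2=n$ is used), so their extension classes coincide and the $(2,1)$ and $(3,2)$ coordinates of $\GaP$ are linked by a single equation. Thus the unipotent radical of $\mathbf G$ is spanned by the linked direction $E=e_{21}+c\,e_{32}$ of torus weight $a^{-n}$ and the direction $F=e_{31}$ of torus weight $a^{-2n}$; these span an abelian Lie algebra, but $E^{2}=c\,e_{31}$ is a nonzero multiple of $F$. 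Consequently $2\le\dim\GaP\le 3$: the lower bound holds because $\GaP$ surjects onto the two-dimensional group $\Gm\ltimes\Ga$ governing $\{\pit,\zn\}$, which is all of that group by Chang--Yu since $n$ is ``odd.''

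The heart of the argument is to decide whether the weight-$a^{-2n}$ line $\langle F\rangle$ lies in the unipotent radical of $\GaP$. As the weights $a^{-n}$ and $a^{-2n}$ are distinct, the radical is a $\Gm$-stable line equal to $\langle E\rangle$ or $\langle F\rangle$, and it cannot be $\langle F\rangle$ because the projection to the $\{\pit,\zn\}$-group is surjective with nontrivial unipotent part. Hence either $\dim\GaP=3$, giving algebraic independence, or $\dim\GaP=2$ and $\GaP=\Gm\ltimes\langle E\rangle$ exactly. In the latter case, writing a general element as $\mathrm{diag}(a^{2n},a^{n},1)\exp(sE)$ and reading off its $(3,1)$ and $(3,2)$ entries, $\tfrac{c s^{2}}{2}$ and $cs$, shows the subgroup is cut out by one quadratic equation relating them; translating this through Papanikolas' dictionary into a relation among the entries of $\Psit(\theta)$ produces precisely $\zn^{2}-2\znn\in\pit^{2n}\cdot\kbt$, the factor $2$ coming from the $\tfrac{s^{2}}{2}$ in the exponential. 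This establishes the dichotomy of the first assertion.

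For the final statement I would rule out the relation when $2n$ is ``odd.'' Expanding the square of the defining sum and splitting into the ranges $\deg a>\deg b$, $\deg a=\deg b$, $\deg a<\deg b$ gives $\zn^{2}-2\znn=\sum_{d\ge 0}\big(\sum_{\deg a=d}a^{-n}\big)^{2}$ over monic $a$, which is nonzero since its $d=0$ term is $1$; this explicit weight-$2n$ quantity I expect to coincide, up to a factor in $\kbt$, with the depth-one value $\latn$. Membership of it in $\pit^{2n}\cdot\kbt$ is exactly the assertion that the corresponding rank-two extension of $\mathbf 1$ by $C^{\otimes 2n}$ is Eulerian, i.e.\ splits over $\kb$, and by the parity obstruction underlying Carlitz's and Chang--Yu's results a nonzero weight-$w$ value of this type can be Eulerian only when $q-1\mid w$. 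Since $2n$ is ``odd'' this is impossible, so the relation fails and we land in the algebraically independent case. The main obstacle I anticipate is precisely the Galois-group computation of the middle section: proving rigorously that the $a^{-n}$-weight linking forced by $s_1=s_2$ and, in the degenerate case, the single quadratic relation are the \emph{only} constraints on $\GaP$, which requires controlling the unipotent radical as a torus-stable Lie subalgebra and matching its weight spaces against the extension groups of Carlitz tensor powers, with no unexpected sub-representations appearing.
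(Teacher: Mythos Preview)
Your overall strategy matches the paper's: build the size-$3$ pre-$t$-motive, embed $\GaP$ in the evident $3$-dimensional solvable group $G$, use Chang--Yu to get $\dim\GaP\ge 2$, and if $\dim\GaP=2$ pin down the one-dimensional unipotent radical $V$ and read off the single relation among periods. Two points need more care.

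First, your Lie-theoretic determination of $V$ via $\exp(sE)$ and the entry $\tfrac{cs^{2}}{2}$ breaks down when $p=2$. The paper avoids exponentials entirely and works directly with group elements: for $X\in V(\Fb)$ and integers $r$ prime to $p$ it computes $(r.X)X^{-r}\in V(\Fb)\cap W_{1}(\Fb)=1$, which yields $\tfrac{r-1}{2}x^{2}=(r-1)y$. For $p>2$ this gives $y=x^{2}/2$, as you predict; for $p=2$, taking $r\equiv 3\pmod 4$ forces $x^{2}=0$, hence $X\in W_{1}(\Fb)$ and so $V(\Fb)=1$, contradicting $\dim V=1$. Thus in characteristic $2$ one always has $\dim\GaP=3$. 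Your weight-space heuristic does not detect this, because in characteristic $2$ the abelian group $W$ is a nonsplit extension of $\Ga$ by $\Ga$ and has no one-dimensional smooth subgroup surjecting onto $V'$; the Lie algebra alone cannot see this.

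Second, once you know (for $p>2$) that $\Gamma_{\Fb}$ is cut out by a polynomial $Q=2X_{31}-\cz(X_{22}^{2}-1)-X_{32}^{2}$ with $\cz\in\Fb$, you still need $\cz\in\Fq(t)$ so that the resulting period relation, after evaluating at $t=\theta^{q^{N}}$, lands in $k$ rather than merely $\kb$; the paper does this by a short Galois-descent argument on the ideal $(Q)$. Your phrase ``translating through Papanikolas' dictionary'' hides this step. Finally, for the last assertion the paper's argument is one line: $\zn,\znn\in\kinf$ while $\pit^{2n}\notin\kinf$ when $(q-1)\nmid 2n$, so $\zn^{2}-2\znn\in\pit^{2n}\cdot k^{\times}$ is impossible. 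Your detour through $\sum_{d}S_{d}(n)^{2}$ and an Eulerian criterion is unnecessary; your expected identity $\lan^{2}-2\laan=\latn$ does hold (it is the shuffle identity of Remark~\ref{H=1}), but invoking Chang--Yu for $\latn(\theta)$ rather than for $\zeta(2n)$ would itself require justification, whereas the $\kinf$-membership argument needs none.
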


\begin{rema}
If $p = 2$, then $2n$ is ``odd'' if and only if $n$ is ``odd''.
Thus $\pit$, $\zn$ and $\znn$ are algebraically independent over $\kb$ for each ``odd'' integer $n$.

On the other hand, in characteristic zero, $2n$ is always even.
Thus the second part of Theorem \ref{main} does not occur in this case.
In fact, we have the relation
$\zeta_{\Q}(n)^{2} - 2 \zeta_{\Q}(n,n) = \zeta_{\Q}(2n) \in \pi^{2n} \cdot \Q^{\times}$.
\end{rema}

\begin{rema}\label{shuffle}
If $p^{e}$ divides $n_{1}$ and $n_{2}$ and $n_{1} / p^{e} + n_{2} / p^{e} \leq q$ for some $e \geq 0$, then we have the harmonic shuffle product
$\zeta(n_{1}) \zeta(n_{2}) = \zeta(n_{1}, n_{2}) + \zeta(n_{2}, n_{1}) + \zeta(n_{1} + n_{2})$ (\cite[Theorem 1]{Tha2}, or see Remark \ref{H=1}).
In particular, if $2n = p^{e}(q-1)$ then we have the relation
$\zn^{2} - 2\znn = \zeta(2n) \in \pit^{2n} \cdot k^{\times}$
(when $p = 2$, this follows directly, but in this case $n$ is ``even'').
Thus, the latter case of the first part of Theorem \ref{main} actually occurs when $p \geq 3$.
We do not know what happens in the case where $2n = m (q-1)$ for general $m$ (including the case where $n$ is ``even'').
\end{rema}

Since $\pit$ and $\zn$ are algebraically independent over $\kb$ for each ``odd'' integer $n$ (\cite{ChYu}),
we have the following corollary:
\begin{coro}\label{pit-znn}
Let $n \geq 1$ be an ``odd'' integer.
Then $\pit$ and $\znn$ are algebraically independent over $\kb$.
\end{coro}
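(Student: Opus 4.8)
The plan is to realize $\pit,\zn,\znn$ as period coordinates of a single $t$-motive and to extract the algebraic relations from the dimension of its motivic Galois group, in the style of Anderson--Thakur and Papanikolas. First I would attach to $\znn$ its depth-two $t$-motive $\mathcal M$, whose Frobenius matrix $\Phi\in\Mat_{3}(\kb[t])$ is lower triangular with diagonal $\big((t-\theta)^{2n},(t-\theta)^{n},1\big)$ and whose rigid analytic trivialization $\Psi$ specializes at $t=\theta$ to a matrix $\Psi(\theta)=(p_{ij})$ generating the field $\kb(\pit^{n},\zn,\znn)$. Since $\pit$ and $\pit^{n}$ are algebraic over one another, $\tdeg_{\kb}\kb(\pit,\zn,\znn)=\tdeg_{\kb}\kb(\Psi(\theta))$, and by Papanikolas' theorem this equals $\dim\GaP$. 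The whole statement then reduces to proving $\dim\GaP\in\{2,3\}$ and identifying the relation imposed when $\dim\GaP=2$.

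The first substantial step is to confine $\GaP$ to an explicit $3$-dimensional group. The block shape of $\Phi$ forces $\GaP$ into the lower triangular group whose diagonal torus is the Carlitz $\Gm$ acting with weights $2n,n,0$ and whose unipotent part lies in the $3\times 3$ Heisenberg group. The decisive extra input is the symmetry $n_{1}=n_{2}=n$: the two subdiagonal entries $p_{21},p_{32}$ are both equal to $\zn$ up to one common normalization, which yields the period relation $p_{21}p_{22}=p_{11}p_{32}$. Through Papanikolas' correspondence between $\kb$-relations among the $p_{ij}$ and $\mathbb F_{q}(t)$-defining equations of $\GaP$, this relation places $\GaP$ inside a group $H=\Gm\ltimes U$ of dimension $3$, where $U$ is a $2$-dimensional commutative unipotent group on which $\Gm$ acts with two distinct weights, $-n$ on the $\zn$-direction and $-2n$ on the $\znn$-direction.

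Given $H$, the bounds $\dim\GaP\le\dim H=3$ and $\dim\GaP\ge 2$ pin down the dimension. The lower bound comes from the surjection of $\GaP$ onto the Galois group $\Gm\ltimes\Ga$ attached to $\pit$ and $\zn$, which is $2$-dimensional precisely because $\pit,\zn$ are algebraically independent (Chang--Yu, $n$ ``odd''). If $\dim\GaP=3$ then $\GaP=H$ and the three values are algebraically independent. If $\dim\GaP=2$ then $\GaP=\Gm\ltimes\ell$ for a $1$-dimensional $\Gm$-stable subgroup $\ell\subseteq U$ that still surjects onto the $\zn$-direction; because the two weights on $U$ are distinct, the subgroup $\ell$ with this surjectivity is unique when it exists. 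Translating its equation back through Papanikolas' dictionary, and using Chang's theorem that relations among MZVs are $k$-homogeneous to force the coefficients into $k$, I would obtain precisely the relation $\zn^{2}-2\znn\in\pit^{2n}\cdot k^{\times}$.

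For the final assertion I would show this relation is impossible once $2n$ is ``odd'', by a valuation argument requiring no further transcendence theory: from $\pit\in(-\theta)^{1/(q-1)}\kinf^{\times}$ we get $\pit^{2n}\in(-\theta)^{2n/(q-1)}\kinf^{\times}$, so $\pit^{2n}\in\kinf$ if and only if $(q-1)\mid 2n$, i.e. if and only if $2n$ is ``even''; but $\zn^{2}-2\znn\in\kinf$ and $k^{\times}\subseteq\kinf^{\times}$, so for $2n$ ``odd'' no equation $\zn^{2}-2\znn=\mu\pit^{2n}$ with $\mu\in k^{\times}$ can hold, whence $\dim\GaP=3$. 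I expect the main obstacle to be the third paragraph: verifying that the symmetry relation $p_{21}p_{22}=p_{11}p_{32}$ genuinely holds and then tracking the Anderson--Thakur interpolation factors precisely enough that the unique line $\ell$ translates into the stated relation --- in particular producing both the coefficient $2$ and the term $\pit^{2n}$. A related subtlety is that when $p=2$ the group $U$ becomes a non-split Witt-type extension admitting no $\Gm$-stable $\ell$ surjecting onto the $\zn$-direction, which already forces $\dim\GaP=3$ and explains why the ``odd'' hypothesis on $n$ alone suffices in that characteristic.
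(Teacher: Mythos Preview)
Your proposal is a reasonable sketch of Theorem~\ref{main}, not of Corollary~\ref{pit-znn}. The corollary asserts that $\pit$ and $\znn$ are algebraically independent for every ``odd'' $n$, whereas what you actually argue is the dichotomy ``either $\dim\GaP=3$ or $\zn^{2}-2\znn\in\pit^{2n}\cdot k^{\times}$'', together with the ruling-out of the second branch when $2n$ is ``odd''. Neither of these is the target statement: the corollary must also cover the case where $n$ is ``odd'' but $2n$ is ``even'' (e.g.\ $p\ge 3$, $q-1\mid 2n$), and in that regime your valuation argument does not apply and $\dim\GaP$ may genuinely equal $2$.

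What is missing is the short final deduction that the paper leaves implicit. Assume Theorem~\ref{main}. In the branch $\dim\GaP=3$ the conclusion is immediate. In the branch $\zn^{2}-2\znn=c\,\pit^{2n}$ with $c\in k^{\times}$, first note $p\neq 2$ (otherwise the relation reads $\zn^{2}\in\pit^{2n}k^{\times}$, contradicting the Chang--Yu independence of $\pit$ and $\zn$); then $\zn^{2}=2\znn+c\,\pit^{2n}\in\kb(\pit,\znn)$, so any algebraic dependence between $\pit$ and $\znn$ would force $\zn$ to be algebraic over $\kb(\pit)$, again contradicting Chang--Yu. This is the content of the one-line proof in the paper, which simply cites Theorem~\ref{main} together with \cite{ChYu}. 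Your elaborate reconstruction of the $t$-motivic machinery is thus aimed at the wrong statement; once Theorem~\ref{main} is available, the corollary needs only the paragraph above.
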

Corollary \ref{pit-znn} also follows from the ``Eulerian'' criterion (\cite{CPY}) and the fact that
if a multizeta value is not ``Eulerian'' then it is algebraically independent from $\pit$ over $\kb$ (\cite{Chan}).

\begin{coro}\label{Z2}
Let $\mcZb_{2}$ be the $\kb$-vector space spanned by $\zeta(1,1)$ and $\zeta(2)$
(the weight two MZVs).
We have $\dim_{\kb} \mcZb_{2} = 2$ if $q > 2$ and $\dim_{\kb} \mcZb_{2} = 1$ if $q = 2$.
\end{coro}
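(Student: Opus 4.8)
The plan is to treat the two cases $q>2$ and $q=2$ separately, since $n=1$ is ``odd'' precisely when $q>2$; only the former is covered by Theorem~\ref{main}.

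Suppose first that $q>2$, so that $n=1$ is ``odd'' and Theorem~\ref{main} applies with $n=1$. The harmonic shuffle product of Remark~\ref{shuffle} (with $n_{1}=n_{2}=1$, allowed because $1+1=2\leq q$) gives
\[
\zeta(1)^{2}=2\zeta(1,1)+\zeta(2),
\]
so that $\zeta(1)^{2}-2\zeta(1,1)=\zeta(2)$. Hence the dichotomy of Theorem~\ref{main} reads: either (A) $\pit,\zeta(1),\zeta(1,1)$ are algebraically independent over $\kb$, or (B) $\zeta(2)\in\pit^{2}\cdot k^{\times}$. Since $\zeta(2)\neq 0$, it suffices to show that $\zeta(2)$ and $\zeta(1,1)$ are linearly independent over $\kb$, so I would assume a relation $\alpha\zeta(2)+\beta\zeta(1,1)=0$ with $\alpha,\beta\in\kb$ and derive $\alpha=\beta=0$ in each case. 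In case (A), substituting $\zeta(2)=\zeta(1)^{2}-2\zeta(1,1)$ yields $\alpha\zeta(1)^{2}+(\beta-2\alpha)\zeta(1,1)=0$; as any two of the three elements are algebraically independent, this polynomial relation in $\zeta(1),\zeta(1,1)$ forces $\alpha=0$ and $\beta-2\alpha=0$, hence $\alpha=\beta=0$. In case (B), writing $\zeta(2)=\gamma\pit^{2}$ with $\gamma\in k^{\times}$ turns the relation into $\alpha\gamma\pit^{2}+\beta\zeta(1,1)=0$; by Corollary~\ref{pit-znn} the elements $\pit$ and $\zeta(1,1)$ are algebraically independent over $\kb$, so again $\alpha\gamma=0$ and $\beta=0$, whence $\alpha=\beta=0$. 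Either way $\dim_{\kb}\mcZb_{2}=2$.

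For $q=2$ the integer $n=1$ is ``even'' (here $q-1=1$ divides $1$), so Theorem~\ref{main} does not apply and a separate input is needed. In this case I would exhibit $\zeta(1,1)$ as an explicit $k$-multiple of $\zeta(2)$. Writing $S_{d}=\sum_{\deg a=d}a^{-1}$ for the degree-$d$ power sum (so that $S_{d}=1/L_{d}$ in characteristic $2$, with $L_{d}=\prod_{i=1}^{d}(\theta^{q^{i}}-\theta)$) and using $\zeta(1,1)=\sum_{d_{1}>d_{2}}S_{d_{1}}S_{d_{2}}$ together with $\zeta(2)=\sum_{d}S_{d}^{2}$, a direct computation gives, in characteristic $2$,
\[
\zeta(1,1)=\frac{1}{\theta^{2}-\theta}\,\zeta(2).
\]
Alternatively, since $\zeta(2)\in\pit^{2}\cdot k^{\times}$ by Carlitz's theorem (\cite{Carl}), one may instead verify via the ``Eulerian'' criterion of \cite{CPY} that $\zeta(1,1)$ is Eulerian, i.e.\ $\zeta(1,1)\in\pit^{2}\cdot\kb$. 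In either formulation $\zeta(2)$ and $\zeta(1,1)$ are $\kb$-linearly dependent and both nonzero, so $\dim_{\kb}\mcZb_{2}=1$.

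The routine part is the shuffle bookkeeping and the two short linear-algebra arguments for $q>2$, which reduce cleanly to algebraic independence statements already recorded in the excerpt (Theorem~\ref{main}, Corollary~\ref{pit-znn}, and \cite{ChYu}); in particular the case split on the dichotomy is exactly what makes the characteristic-$2$ subtlety (where $2\zeta(1,1)$ would vanish) harmless, since case (B) is handled through $\pit$ rather than through dividing by $2$. The main obstacle is the case $q=2$: because $n=1$ is then ``even'' it lies outside the scope of Theorem~\ref{main}, and establishing the dependence of $\zeta(1,1)$ on $\zeta(2)$ requires the separate explicit evaluation (or the Eulerian criterion) indicated above rather than the algebraic-independence machinery used for $q>2$.
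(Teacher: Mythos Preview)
Your argument is correct and uses the same ingredients as the paper (the shuffle identity, Theorem~\ref{main}, and the explicit Thakur relation for $q=2$), but the organization for $q>2$ differs slightly: the paper splits further into $q\geq 4$ (where $2$ is ``odd'', so the second clause of Theorem~\ref{main} already gives algebraic independence of $\pit,\zeta(1),\zeta(1,1)$, and $\{\zeta(1)^{2},\zeta(1,1)\}$ is a basis) and $q=3$ (where $2$ is ``even'', so $\zeta(2)\in\pit^{2}\cdot k^{\times}$, and \cite{ChYu} gives that $\{\zeta(1)^{2},\zeta(2)\}$ is a basis). Your version instead runs the full dichotomy of Theorem~\ref{main} uniformly and invokes Corollary~\ref{pit-znn} in case~(B); this is a clean alternative that avoids the extra case split, at the cost of appealing to Corollary~\ref{pit-znn} rather than directly to \cite{ChYu}. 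For $q=2$ both proofs simply quote the known relation $\zeta(1,1)=\zeta(2)/(\theta^{2}+\theta)$ from \cite[Theorem~5.10.13]{Tha1}; your ``direct computation'' sketch is not needed once that reference is cited.
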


\begin{proof}
Note that by Remark \ref{shuffle}, we have $\zeta(1)^{2} = 2 \zeta(1,1) + \zeta(2) \in \mcZb_{2}$ for each $q$.
If $q \geq 4$ then $2$ is ``odd''.
Thus $\zeta(1)$ and $\zeta(1,1)$ (and $\pit$) are algebraically independent over $\kb$ by Theorem \ref{main}.
Thus $\zeta(1)^{2}$ and $\zeta(1,1)$ form a basis of $\mcZb_{2}$.
If $q = 3$ then $2$ is ``even'', and hence we have $\zeta(2) \in \pit^{2} \cdot k^{\times}$.
However $\pit$ and $\zeta(1)$ are algebraically independent over $\kb$ (\cite{ChYu}).
Thus $\zeta(1)^{2}$ and $\zeta(2)$ form a basis of $\mcZb_{2}$.
When $q = 2$, we have the relation $\zeta(1,1) = \zeta(2) / (\theta^{2} + \theta)$ (\cite[Theorem 5.10.13]{Tha1}).
\end{proof}

\begin{rema}
If $p \neq 2$ then $\zeta(1)$ and $\zeta(2)$ are algebraically independent over $\kb$ (\cite{ChYu}).
Thus a new result in Corollary \ref{Z2} is the characteristic $2$ case with $q \neq 2$.
\end{rema}

In Section \ref{t-motive}, we review Papanikolas' theory for $t$-motives briefly.
Theorem \ref{main} is proved in Section \ref{proof}.
The outline of the proof is as follows:
Following Anderson-Thakur (\cite{AT09}), we construct a rigid analytically trivial pre-$t$-motive $M$ such that the MZVs we want to know appear in its ``period matrix'' $\Psi$.
Then we obtain two group schemes $\GaM$ and $\GaP$,
where $\GaM$ is the fundamental group of the Tannakian category generated by $M$,
and $\GaP$ is defined from $\Psi$ more explicitly than $\GaM$.
By Papanikolas' theory, these two group schemes are isomorphic and their dimensions are the same as the transcendental degree which we want,
in this case, the transcendental degree of $\kb(\pit, \zn, \znn)$ over $\kb$.
By the Tannakian duality, we have a natural projection $\GaM \to \Gm$.
Set $V$ to be its kernel.
Since $\pit$ and $\zn$ are algebraically independent over $\kb$, the dimension of $\GaM$ is greater than or equal to $2$.
We suppose that the dimension of $\GaM$ is $2$.
Then we can determine $V$ explicitly.
By using the expression of $V$, we can also compute $\GaM$ explicitly.
Then we obtain a non-trivial defining polynomial of $\GaM$.
By returning to the definition of $\GaP$,
the MZVs in question must satisfy some relation coming from the defining polynomial.
If $2n$ is ``odd'', we can check easily that the MZVs do not satisfy this relation, and so we have a contradiction,
showing that $\tdeg_{\kb} \kb(\pit, \zn, \znn) \geq 3$.

\begin{ackn}
The author would like to thank Yuichiro Taguchi for reading a preliminary version of this paper carefully
and giving him many useful comments.
He also would like to thank
Chieh-Yu Chang for many comments and suggestions about the treatment of smoothness of group schemes in the proof of Theorem \ref{L-main},
and Dinesh S.\ Thakur for informing him of some relations among multizeta values in Remark \ref{shuffle} and Corollary \ref{Z2}.
This work was supported by the JSPS Research Fellowships for Young Scientists.
\end{ackn}

\section{Review of $t$-motives}\label{t-motive}
In this section, we review the notions of pre-$t$-motives.
For more details, see \cite{Papa}.
We continue to use the notations of the previous section.
Let $\cinf$ be the $\infty$-adic completion of $\kinfb$ and $|\cdot|_{\infty}$ its multiplicative valuation.
For any subset $T$ of $\cinf$, we denote by $\langle T \rangle_{k}$ the $k$-vector subspace of $\cinf$ spanned by $T$.
Let $t$ be a variable independent of $\theta$
and $\cinf(\!(t)\!)$ the field of formal Laurent series over $\cinf$.
Let $\T := \{f \in \cinf[\![t]\!] | f \mathrm{ \ converges \ on \ } |t|_{\infty} \leq 1\}$ be the Tate algebra
and $\LL$ the fractional field of $\T$.
We set
$$\E := \left\{\sum a_{i} t^{i} \in \cinf[\![t]\!] \Bigl| \lim_{i \to \infty} \sqrt[i]{|a_{i}|_{\infty}} = 0, \ [\kinf(a_{0}, a_{1}, \dots):\kinf] < \infty \right\}.$$
For any integer $n \in \Z$ and any formal Laurent series $f = \sum_{i} a_{i} t^{i} \in \cinf(\!(t)\!)$, we set
$$ f^{(n)} := \sum_{i} a_{i}^{q^n} t^{i} $$
the $n$-fold twist of $f$, and $\sigma(f) := f^{(-1)}$.
The operation $f \mapsto f^{(n)}$ stabilizes the subfields $\LL$ and $\kb(t)$ of $\cinf(\!(t)\!)$.
For a ring or a module $R$, we denote by $\Mat_{r \times s}(R)$ the set of $r \times s$ matrices with coefficients in $R$.

A pre-$t$-motive is an \'etale $\vp$-module over $(\kb(t), \sigma)$,
it means a finite-dimensional $\kb(t)$-vector space $M$ equipped with a $\sigma$-semilinear bijective map
$\vp \colon M \to M$.
A morphism of pre-$t$-motives is a $\kb(t)$-linear map which is compatible with the $\vp$'s.
There is a tensor product of two pre-$t$-motives.
For any pre-$t$-motive $M$, the Betti realization of $M$ is an $\Fq(t)$-vector space defined by
$$M^{B} := \left(\LL \otimes_{\kb(t)} M \right)^{\sigma \otimes \vp = 1},$$
where $(-)^{\sigma \otimes \vp = 1}$ is the $\sigma \otimes \vp$-fixed part.
Then we have a natural injection
$$\LL \otimes_{\Fq(t)} M^{B} \hookrightarrow \LL \otimes_{\kb(t)} M.$$
A pre-$t$-motive $M$ is called {\it rigid analytically trivial} if the above injection is an isomorphism.
The category of rigid analytically trivial pre-$t$-motives forms a neutral Tannakian category over $\Fq(t)$
with fiber functor $M \mapsto M^{B}$.
For any such $M$, we set $\GaM$ to be the fundamental group of the Tannakian subcategory generated by $M$ with respect to the Betti realization.
By definition, $\GaM$ is an $\Fq(t)$-subgroup scheme of $\GL(M^{B})$.

Let $M$ be a pre-$t$-motive and $r$ the dimension of $M$ over $\kb(t)$.
For a $\kb(t)$-basis $\mbm \in \Mat_{r\times 1}(M)$ of $M$,
there exists a unique matrix $\Phi \in \GL_{r}(\kb(t))$ such that $\vp \mbm = \Phi \mbm$.
Conversely when a matrix $\Phi \in \GL_{r}(\kb(t))$ is given,
we can construct a pre-$t$-motive $M$ by $M := \kb(t)^{r}$ with $\vp (x_{1}, \dots, x_{r}) := (x_{1}^{(-1)}, \dots, x_{r}^{(-1)}) \Phi$.
Clearly the pre-$t$-motive $M$ is determined by the matrix $\Phi$,
and we say that $M$ is the pre-$t$-motive defined by $\Phi$.
We can show that $M$ is rigid analytically trivial if and only if
there exists a matrix $\Psi \in \GL_{r}(\LL)$ such that $\Psi^{(-1)} = \Phi \Psi$.
The matrix $\Psi$ is called a rigid analytic trivialization of $\Phi$.
Let $\Psi_{1}, \Psi_{2} \in \GL_{r}(\LL \otimes_{\kb(t)} \LL)$ be the matrices such that
$(\Psi_{1})_{ij} = \Psi_{ij} \otimes 1$ and $(\Psi_{2})_{ij} = 1 \otimes \Psi_{ij}$
and set $\tilde{\Psi} := \Psi_{1}^{-1} \Psi_{2}$.
Let $X := (X_{ij})_{ij}$ be the $r \times r$ matrix of independent variables $X_{ij}$.
We define an $\Fq(t)$-algebra homomorphism $\nu$ by
$$\nu \colon \Fq(t)[X_{11}, X_{12}, \dots, X_{rr}, 1/\det X] \to \LL \otimes_{\kb(t)} \LL ; \ X_{ij} \mapsto \tilde{\Psi}_{ij}$$
and set
$$\GaP := \Spec ( \Fq(t)[X_{11}, X_{12}, \dots, X_{rr}, 1/\det X] / \Ker \nu ) \subset \GL_{r}.$$
We can easily check that $\Psi^{-1} \mbm$ is an $\Fq(t)$-basis of $M^{B}$.
For each $\Fq(t)$-algebra $R$, we have a map
$$\GaP(R) \to \Gamma_{M}(R); \ \gamma \mapsto (\mbf \cdot \Psi^{-1} \mbm \mapsto \mbf \gamma^{-1} \cdot \Psi^{-1} \mbm)$$
where $\mbf$ runs over all elements of $\Mat_{1 \times r}(R)$.

\begin{theo}[{\cite[Theorem 4.3.1, 4.5.10, 5.2.2]{Papa}}]\label{dim=tdeg}
Let $M$ be a rigid analytically trivial pre-$t$-motive equipped with $\Phi$ and $\Psi$ as above.

$(1)$ The scheme $\GaP$ is a smooth subgroup scheme of $\GL_{r}$ and 
the above map $\GaP \to \GaM$ is an isomorphism of group schemes over $\Fq(t)$.

$(2)$ We have
$$\dim \GaP = \tdeg_{\kb(t)} \kb(t)(\Psi_{11}, \Psi_{12}, \dots, \Psi_{rr}).$$

$(3)$Assume that $\Phi \in \Mat_{r \times r}(\kb[t])$, $\Psi \in \Mat_{r \times r}(\T)$,
and $\det \Phi = c (t-\theta)^{d}$ for some $c \in \kbt$ and $d \geq 0$.
Then we have $\Psi \in \Mat_{r \times r}(\E)$ and
$$\tdeg_{\kb(t)} \kb(t)(\Psi_{11}, \Psi_{12}, \dots, \Psi_{rr}) = \tdeg_{\kb} \kb(\Psi_{11}(\theta), \Psi_{12}(\theta), \dots, \Psi_{rr}(\theta)).$$
\end{theo}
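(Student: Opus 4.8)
The plan is to build the $\sigma$-difference Galois theory attached to the equation $\sigma(\Psi)=\Phi\Psi$, identify its Galois group with both $\GaP$ and $\GaM$, and then transport transcendence information from the generic point to the fibre at $t=\theta$. First I would check that $\GaP$ is a closed subgroup scheme of $\GL_{r}$ over $\Fq(t)$. The key point is that, because the tensor product is taken over $\kb(t)$ and $\Phi$ has entries in $\kb(t)$, one has $\Phi\otimes 1 = 1\otimes\Phi$ in $\Mat_{r\times r}(\LL\otimes_{\kb(t)}\LL)$; combined with $\Psi_{1}^{(-1)}=(\Phi\otimes 1)\Psi_{1}$ and $\Psi_{2}^{(-1)}=(1\otimes\Phi)\Psi_{2}$ this gives $\tilde{\Psi}^{(-1)}=\Psi_{1}^{-1}(\Phi\otimes 1)^{-1}(1\otimes\Phi)\Psi_{2}=\tilde{\Psi}$, so the entries of $\tilde{\Psi}$ are $\sigma$-invariant and $\Ker\nu$ is genuinely defined over $\Fq(t)$. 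Passing to a triple tensor product $\Psi_{1},\Psi_{2},\Psi_{3}$ and using the cocycle identity $\Psi_{1}^{-1}\Psi_{3}=(\Psi_{1}^{-1}\Psi_{2})(\Psi_{2}^{-1}\Psi_{3})$ produces the comultiplication, with $\tilde{\Psi}^{-1}$ furnishing the antipode; verifying compatibility with $\nu$ shows $\GaP$ is a subgroup scheme and that $\gamma\mapsto(\mbf\cdot\Psi^{-1}\mbm\mapsto\mbf\gamma^{-1}\cdot\Psi^{-1}\mbm)$ is a homomorphism.

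For part $(1)$ I would realise the $\kb(t)$-subalgebra of $\LL$ generated by the entries of $\Psi$ and $1/\det\Psi$ as a $\sigma$-Picard--Vessiot ring for $\Phi$, and show that $\Spec$ of this ring is a $\GaP$-torsor for the comparison between the Betti fibre functor and the trivial one. Its geometric integrality forces $\GaP$ to be smooth, which is the delicate point in characteristic $p$. The isomorphism $\GaP\cong\GaM$ is then the core Tannakian computation: one proves that the functor sending a representation of $\GaP$ to the corresponding solution space identifies $\mathrm{Rep}(\GaP)$ with the Tannakian subcategory generated by $M$ under the Betti realisation, so the two fundamental groups coincide.

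For part $(2)$, by construction $\GaP$ is $\Spec$ of the $\Fq(t)$-algebra generated by the $\tilde{\Psi}_{ij}$, whence $\dim\GaP=\tdeg_{\Fq(t)}\Fq(t)(\tilde{\Psi}_{ij})$; the task is to match this with $\tdeg_{\kb(t)}\kb(t)(\Psi_{ij})$. I would do this by recognising $\Fq(t)(\tilde{\Psi}_{ij})$ as the function field of the torsor above, whose dimension over $\Fq(t)$ is exactly the transcendence degree of the Picard--Vessiot extension $\kb(t)(\Psi_{ij})$ over $\kb(t)$, which is the standard ``dimension of the Galois group equals transcendence degree'' principle of difference Galois theory. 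Finally, for part $(3)$ I would first bootstrap convergence from $\sigma(\Psi)=\Phi\Psi$: since $\det\Phi=c(t-\theta)^{d}$ vanishes only at $t=\theta$ and $\Phi\in\Mat_{r\times r}(\kb[t])$, iterating the equation shows that the entries of $\Psi$, a priori in $\T$, extend to entire functions with the growth and algebraicity required to lie in $\E$, so that $\Psi(\theta)$ is defined. Equality of the two transcendence degrees is then the deep input, supplied by the Anderson--Brownawell--Papanikolas criterion: every $\kb$-linear relation among the values $\Psi_{ij}(\theta)$ is the specialisation at $t=\theta$ of a $\kb[t]$-linear relation among the functions $\Psi_{ij}(t)$, and applying this to monomials upgrades the linear statement to the full equality.

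I expect the two genuine obstacles to be the Tannakian equivalence $\GaP\cong\GaM$ together with the smoothness of $\GaP$, which require the full $\sigma$-Picard--Vessiot formalism and careful handling of reducedness in characteristic $p$, and the specialisation theorem of part $(3)$, whose engine is the ABP criterion rather than any formal manipulation. The remaining steps are essentially bookkeeping with the twist operation and the tensor construction.
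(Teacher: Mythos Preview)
The paper does not supply its own proof of this theorem; it is quoted as a black box from Papanikolas \cite[Theorems 4.3.1, 4.5.10, 5.2.2]{Papa}, so there is no in-paper argument to compare against. Your outline is a faithful high-level summary of how those results are actually established in \cite{Papa}: the $\sigma$-Picard--Vessiot ring and torsor formalism for parts (1) and (2), with smoothness of $\GaP$ coming from geometric integrality of the torsor, and the Anderson--Brownawell--Papanikolas linear-independence criterion as the real engine behind the specialisation step in part (3). Nothing further to compare here.
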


When a $t$-motive $M$ has some special form, we can describe $\dim \GaM$ in terms of the dimension of some $k$-linear space.
We set
$$\Omega(t) := (-\theta)^{-\frac{q}{q-1}} \prod_{i=1}^{\infty}\left(1-\frac{t}{\theta^{q^i}}\right) \in \kinfb[\![t]\!],$$
which is an element of $\E$.
Since $\Omega$ has infinitely many zeros, it is transcendental over $\kb(t)$.
It satisfies the equations
$$\Omega^{(-1)} = (t-\theta) \Omega \ \ \mathrm{and} \ \ \Omega(\theta) = \frac{1}{\pit}.$$
For each $\alpha = \sum_{i} a_{i} t^{i} \in \kb[t]$, we set $\aval := \max_{i} \{ |a_{i}|_{\infty} \}$.
When $\aub = (\alpha_{1}, \dots, \alpha_{d})$ and $\nub = (n_{1}, \dots, n_{d})$ ($\alpha_{j} \in \kb[t] \smallsetminus \{ 0 \}$, $n_{j} \geq 1$)
satisfy $\ajval < |\theta|_{\infty}^{\frac{n_{j} q}{q-1}}$, we set
$$L_{\aub, \nub} (t) := \sum_{i_{1} > \cdots > i_{d} \geq 0}
\frac{\alpha_{1}^{(i_{1})} \cdots \alpha_{d}^{(i_{d})}}{((t-\theta^{q}) \cdots (t-\theta^{q^{i_{1}}}))^{n_{1}} \cdots ((t-\theta^{q}) \cdots (t-\theta^{q^{i_{d}}}))^{n_{d}}}.$$
Note that it converges on $|t|_{\infty} < |\theta|_{\infty}^{q}$ and satisfies
$$L_{\aub, \nub}^{(-1)} =
\frac{\alpha_{d}^{(-1)}}{(t-\theta)^{n_{1} + \cdots + n_{d-1}}} L_{\alpha_{1}, \dots, \alpha_{d-1}, n_{1}, \dots, n_{d-1}}
+ \frac{1}{(t-\theta)^{n_{1} + \cdots + n_{d}}} L_{\aub, \nub},$$
where we set $L_{\emptyset, \emptyset} := 1$ for the cases $d = 1$.
Note that when $\alpha_{i} \in \kbt$ for each $i$, $L_{\aub, \nub}(\theta)$ is the Carlitz multiple polylogarithm at the algebraic point $\aub$.

Let $n \geq 1$ be an integer and $\alpha_{1}, \dots, \alpha_{r} \in \kb[t] \smallsetminus \{ 0 \}$ with $|\alpha_{i}|_{\infty} < \nq$ for each $i$.
We set
$$\Phi :=
\begin{bmatrix} (t-\theta)^{n} & & & \\
\alpha_{1}^{(-1)} (t-\theta)^{n} & 1 & & \\
\vdots & & \ddots & \\
\alpha_{r}^{(-1)} (t-\theta)^{n} & & & 1 \\ \end{bmatrix} \in \GL_{r+1}(\kb(t)),$$

$$\Psi :=
\begin{bmatrix} \Omega^{n} & & & \\
\Omega^{n} \laln & 1 & & \\
\vdots & & \ddots & \\
\Omega^{n} \larn & & & 1 \\ \end{bmatrix} \in \GL_{r+1}(\LL).$$
Then we have
$$\Psi^{(-1)} = \Phi \Psi.$$

\begin{theo}[{\cite[Theorem 6.3.2]{Papa}}, {\cite[Theorem 3.1]{ChYu}}]\label{lin-alg}
Let $M$ be the $t$-motive defined by the matrix $\Phi$ as above.
Then we have
$$\dim \GaM = \dim_{k} \langle \pit^{n}, \laln(\theta), \dots, \larn(\theta) \rangle_{k}.$$
\end{theo}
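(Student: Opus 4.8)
The plan is to exploit the explicit shape of $\Phi$ and $\Psi$ together with Papanikolas' theory (Theorem \ref{dim=tdeg}) to reduce the computation of $\dim \GaM$ to a purely linear-algebraic quantity. First I would observe that $\det \Phi = (t-\theta)^{n}$, so part (3) of Theorem \ref{dim=tdeg} applies and gives $\Psi \in \Mat_{(r+1)\times(r+1)}(\E)$ together with the identity $\dim \GaM = \tdeg_{\kb(t)} \kb(t)(\Psi_{ij}) = \tdeg_{\kb} \kb(\Psi_{ij}(\theta))$. Since the entries of $\Psi$ are built from $\Omega^{n}$ and the products $\Omega^{n} L_{\alpha_{i},n}$, specializing at $t=\theta$ and using $\Omega(\theta) = 1/\pit$ shows that the field $\kb(\Psi_{ij}(\theta))$ is generated over $\kb$ by $\pit^{-n}$ and the quantities $\pit^{-n} L_{\alpha_{i},n}(\theta)$, hence equals $\kb(\pit^{n}, L_{\alpha_{1},n}(\theta), \dots, L_{\alpha_{r},n}(\theta))$.

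The next step is to identify $\GaP$ (which is isomorphic to $\GaM$ by part (1) of Theorem \ref{dim=tdeg}) as a unipotent-by-$\Gm$ group and to compute its dimension. I would analyze the functional equation $\Psi^{(-1)} = \Phi \Psi$ to see that $\GaP$ sits in $\GL_{r+1}$ as matrices of the shape with a single scalar in the top-left corner acting on $\Gm$ and a vector group $V \cong \Ga^{r}$ in the first column below it. Concretely, the group $\GaP$ is isomorphic to a subgroup of the semidirect-type matrix group $\left\{ \bigl[\begin{smallmatrix} a & \\ \mathbf{v} & I_{r} \end{smallmatrix}\bigr] \right\}$, and its dimension is $1 + \dim V$, where the scalar part is forced to be all of $\Gm$ because $\Omega$ is transcendental. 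The heart of the matter is then to show that $\dim V$ equals exactly the dimension of the relation-constrained space, which via the Tannakian/Papanikolas dictionary translates the defining ideal of $\GaP$ into $\Fq(t)$-linear relations among the $L_{\alpha_{i},n}$, and these in turn correspond under specialization to $k$-linear relations among the $L_{\alpha_{i},n}(\theta)$.

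The crucial bridge, which I would expect to be the main obstacle, is establishing that $k$-linear dependence among $\pit^{n}, L_{\alpha_{1},n}(\theta), \dots, L_{\alpha_{r},n}(\theta)$ is equivalent to $\Fq(t)$- (equivalently $\kb(t)$-) linear dependence among the corresponding functions $\Omega^{n}, \Omega^{n}L_{\alpha_{1},n}, \dots, \Omega^{n}L_{\alpha_{r},n}$, which is precisely the linear-independence transfer principle underlying the cited results \cite[Theorem 6.3.2]{Papa} and \cite[Theorem 3.1]{ChYu}. This demands a careful argument that any putative $\kb(t)$-linear relation among the period functions can be rigidified (using that these are entire functions in $\E$ satisfying first-order $\sigma$-difference equations) and then specialized at $t=\theta$ without loss of rank, and conversely that a $k$-relation at $\theta$ lifts. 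Once this equivalence is in hand, the equality $\dim \GaM = \dim_{k}\langle \pit^{n}, L_{\alpha_{1},n}(\theta), \dots, L_{\alpha_{r},n}(\theta)\rangle_{k}$ follows by combining the dimension formula $\dim \GaM = 1 + \dim V$ with the identification of $\dim V$ as the number of $k$-linearly independent period values beyond $\pit^{n}$, since adjoining $\pit^{n}$ accounts for the extra $\Gm$-factor.
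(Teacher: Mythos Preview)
The paper does not give its own proof of Theorem~\ref{lin-alg}: it is stated as a citation of \cite[Theorem~6.3.2]{Papa} and \cite[Theorem~3.1]{ChYu}, followed only by the remark that those proofs, written for $\alpha_i \in \kbt$, go through verbatim for $\alpha_i \in \kb[t]\smallsetminus\{0\}$. So there is nothing in the present paper to compare your argument against; the theorem is imported wholesale.

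That said, your outline is a faithful sketch of how the cited references actually argue. You correctly identify the three ingredients: (i) Theorem~\ref{dim=tdeg} reduces $\dim\GaM$ to a transcendence degree; (ii) the shape of $\Phi$ forces $\GaP$ into the block lower-triangular group, giving a surjection onto $\Gm$ with unipotent kernel $V\subset\Ga^{r}$, so $\dim\GaM = 1+\dim V$; (iii) the defining $\Fq(t)$-linear equations of $V$ inside $\Ga^{r}$ correspond, via $\Psit=\Psi_1^{-1}\Psi_2$, to $\kb(t)$-linear relations among $1,\Omega^{n}L_{\alpha_1,n},\dots,\Omega^{n}L_{\alpha_r,n}$, and the transfer between such functional relations and $k$-linear relations among $\pit^{n},L_{\alpha_1,n}(\theta),\dots,L_{\alpha_r,n}(\theta)$ is exactly the content of the cited theorems. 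The one place where you are a bit vague is the last step: in \cite{Papa} and \cite{ChYu} this transfer is not a soft specialization argument but uses that any $\kb(t)$-relation among the $\Omega^{n}L_{\alpha_i,n}$ forces a $\sigma$-difference relation whose coefficients can be shown to lie in $\kb[t]$ and then be evaluated at $t=\theta$ (and conversely, a $k$-relation at $\theta$ is promoted using the structure of solutions of the Frobenius equations). If you intend to write out a self-contained proof, that is the step requiring genuine work; everything else in your plan is routine.
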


\begin{rema}
In \cite{Papa} and \cite{ChYu}, they discussed only the case where $\alpha_{i} \in \kbt$.
But their proofs work also for any $\alpha_{i} \in \kb[t] \smallsetminus \{ 0 \}$.
\end{rema}

\section{Proof of Theorem \ref{main}}\label{proof}
We define two square matrices of size $d + 1$
$$\Phi :=
\begin{bmatrix} (t-\theta)^{n_{1} + \cdots + n_{d}} & & & & \\
\alpha_{1}^{(-1)} (t-\theta)^{n_{1} + \cdots + n_{d}} & (t-\theta)^{n_{2} + \cdots + n_{d}} & & & \\
& \alpha_{2}^{(-1)} (t-\theta)^{n_{2} + \cdots + n_{d}} & \ddots & & \\
& & \ddots & (t-\theta)^{n_{d}} & \\
& & & \alpha_{d}^{(-1)} (t-\theta)^{n_{d}} & 1 \\ \end{bmatrix},$$
and $\Psi := (\Psi_{ij})$, where
$$\Psi_{ij} := \Omega^{n_{j} + \cdots + n_{d}} L_{\alpha_{j}, \dots, \alpha_{i-1}, n_{j}, \dots, n_{i-1}} \ \ \mathrm{if} \ \ 1 \leq j \leq i \leq d + 1,$$
and $\Psi_{ij} := 0$ if $i < j.$
Then we have $\Psi^{(-1)} = \Phi \Psi$.
In particular each component $\Psi_{ij}$ of $\Psi$ is an element of $\E$ by Theorem \ref{dim=tdeg}.

We set $D_{i} := \prod_{j=0}^{i-1} (\theta^{q^{i}} - \theta^{q^{j}})$ for $i \geq 0$.
For each integer $n \geq 0$ with the $q$-expansion $n = \sum_{i} n_{i} q^{i}$ ($0 \leq n_{i} < q$),
the Carlitz factorial is defined by
$$\Gamma_{n+1} := \prod_{i} D_{i}^{n_{i}}.$$
We consider the power sum $S_{i}(n) := \sum\frac{1}{a^{n}}$,
where the sum is over all monic polynomials $a$ in $\Fq[\theta]$ with $\deg a = i$.
For each $n \geq 1$, Anderson and Thakur (\cite{AT90}, \cite{AT09}) defined polynomial $H_{n-1} \in \Fq[\theta, t]$ such that
$$(H_{n-1} \Omega^{n})^{(i)}(\theta) = \frac{\Gamma_{n} S_{i}(n)}{\pit^{n}}$$
for each $i \geq 0$ and $|H_{n-1}|_{\infty} < \nq$.
Chang (\cite{Chan}) showed that
$$(\Omega^{n_{1} + \cdots + n_{d}} L_{H(\nub), \nub})(\theta^{q^{N}})
= \left ( \frac{\Gamma_{n_{1}} \cdots \Gamma_{n_{d}} \zeta(n_{1}, \dots, n_{d})}{\pit^{n_{1} + \cdots + n_{d}}} \right )^{q^{N}}$$
for each $N \geq 0$,
where $\nub := (n_{1}, \dots, n_{d})$ and $H(\nub) := (H_{n_{1}-1}, \dots, H_{n_{d}-1})$.
Thus to prove Theorem \ref{main}, we shall consider the algebraic relations between the special values of $\Omega$, $\lan$ and $\laan$.

\begin{rema}\label{H=1}
We can easily show that
$$L_{\alpha_{1}, n_{1}} L_{\alpha_{2}, n_{2}} = L_{\alpha_{1}, \alpha_{2}, n_{1}, n_{2}} + L_{\alpha_{2}, \alpha_{1}, n_{2}, n_{1}} + L_{\alpha_{1} \alpha_{2}, n_{1} + n_{2}}$$
for each $\alpha_{i}$ and $n_{i}$
(for more general cases, see \cite[Section 6.2]{Chan}. He treated $L_{\aub, \nub}(\theta)$, but the arguments are the same).
By definition, $\Gamma_{n} = 1$ for each $1 \leq n \leq q$,
and by the construction in \cite{AT90}, we know that $H_{n-1} = 1$ for $1 \leq n \leq q$.
Thus if $n_{1} + n_{2} \leq q$, we have
$$L_{H_{n_{1} - 1} H_{n_{2} - 1}, n_{1} + n_{2}} = L_{1, n_{1} + n_{2}} = L_{H_{n_{1} + n_{2} - 1}, n_{1} + n_{2}}.$$
Therefore, we obtain the harmonic shuffle product formula in Remark \ref{shuffle}.
\end{rema}

We fix $\alpha \in \kb[t] \smallsetminus \{ 0 \}$ and $n \geq 1$ such that $\aval < \nq$,
and set matrices
$$\Phi :=
\begin{bmatrix} (t-\theta)^{2n} & & \\
\alpha^{(-1)} (t-\theta)^{2n} & (t-\theta)^{n} & \\
& \alpha^{(-1)} (t-\theta)^{n} & 1 \\ \end{bmatrix} \in \GL_{3}(\kb(t))$$
and
$$\Psi :=
\begin{bmatrix} \Omega^{2n} & & \\
\Omega^{2n} \lan & \Omega^{n} & \\
\Omega^{2n} \laan & \Omega^{n} \lan & 1 \\ \end{bmatrix} \in \GL_{3}(\LL).$$
These matrices satisfy $\Psi^{(-1)} = \Phi \Psi$.
Let $M$ be the rigid analytically trivial pre-$t$-motive defined by $\Phi$ and set $\Gamma := \GaM$ its fundamental group.
By Theorem \ref{dim=tdeg}, we have
$$\dim \Gamma = \tdeg_{\kb(t)} \kb(t)(\Omega, \lan, \laan) = \tdeg_{\kb} \kb(\pit, \lan(\theta), \laan(\theta)).$$

\begin{theo}\label{L-main}
Let $\alpha$ and $n$ be as above.
Assume that $\pit^{n}$ and $\lan(\theta)$ are linearly independent over $k$.
If $p > 2$, assume further that $(\Omega^{n} \lan)^{2} - 2 \Omega^{2n} \laan - c$ and $\Omega^{2n}$ are linearly independent over $\kb(t)$ for each $c \in \Fq(t)$.
Then we have
$$\tdeg_{\kb} \kb(\pit, \lan(\theta), \laan(\theta)) = 3.$$
\end{theo}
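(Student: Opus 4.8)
The goal is to show $\dim\Gamma = 3$. By Theorem~\ref{dim=tdeg}, $\dim\Gamma = \tdeg_{\kb(t)}\kb(t)(\Omega,\lan,\laan)$, and since $\Gamma = \GaM$ is a subgroup scheme of $\GL_3$ acting on the three-dimensional Betti realization, we have $\dim\Gamma \leq 3$. The strategy is to rule out $\dim\Gamma = 0,1,2$ and conclude the only remaining possibility is $\dim\Gamma = 3$. The first two cases are easy: $\Omega$ is transcendental over $\kb(t)$ (it has infinitely many zeros), so $\dim\Gamma \geq 1$; and the hypothesis that $\pit^n$ and $\lan(\theta)$ are linearly independent over $k$ together with Theorem~\ref{lin-alg} (applied to the $2\times 2$ sub-$t$-motive with $\Phi$-block governing $\Omega^n$ and $\Omega^n\lan$) forces $\dim\Gamma \geq 2$. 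So the real work is to exclude $\dim\Gamma = 2$.

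The plan is to assume for contradiction that $\dim\Gamma = 2$ and exploit the structure of $\Gamma$ via the Tannakian projection $\Gamma \to \Gm$ induced by the top-left entry $\Omega^{2n}$ (equivalently by the quotient $t$-motive). Let $V := \Ker(\Gamma \to \Gm)$; this is a unipotent subgroup, since the remaining diagonal action is trivial on the lower blocks. If $\dim\Gamma = 2$, then $\dim V = 1$, and one can describe $V$ explicitly as a line inside the unipotent radical sitting in $\GL_3$. The key computational input is the description of $\GaP$ from Section~\ref{t-motive}: the coordinates of $\Gamma$ are governed by the matrix $\Psit = \Psi_1^{-1}\Psi_2$, so the vanishing/non-vanishing of certain entries translates into polynomial relations among $\Omega,\lan,\laan$. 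First I would pin down $V$ by matching the semilinear relations $\Psi^{(-1)} = \Phi\Psi$ with the requirement that $V$ be $\Fq(t)$-defined and stable; this should force $V$ to be the one-parameter subgroup determined by a single relation among the lower-triangular entries. Then, by analyzing how $\Gamma$ extends $V$ by $\Gm$, I would extract a nontrivial defining polynomial of $\Gamma$.

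The crux is translating this defining polynomial back into a genuine algebraic relation among $\Omega,\lan,\laan$ via the identification $\GaP \cong \GaM$ and then deriving a contradiction. Here the two hypotheses enter: the $p>2$ hypothesis that $(\Omega^n\lan)^2 - 2\Omega^{2n}\laan - c$ and $\Omega^{2n}$ are $\kb(t)$-linearly independent for all $c\in\Fq(t)$ is precisely engineered to block the relation that a two-dimensional $\Gamma$ would impose; the combination $(\Omega^n\lan)^2 - 2\Omega^{2n}\laan$ arises naturally because the $(3,1)$-entry of $\Psi$ carries $\laan$ while the square of the $(2,1)$-entry carries $(\lan)^2$, and the harmonic-shuffle structure relates these. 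The main obstacle, and the step I expect to be most delicate, is the explicit determination of $V$ and the derivation of the defining polynomial of $\Gamma$: one must argue carefully that the only $\Fq(t)$-subgroup schemes of the relevant unipotent group that are compatible with the $\sigma$-action and of the right dimension are the ones giving rise to the forbidden relation. This requires controlling the Galois/Frobenius descent data and using that the entries lie in $\E$, so that specialization at $t=\theta$ (Theorem~\ref{dim=tdeg}(3)) faithfully reflects the transcendence degree. Once the polynomial relation is shown to contradict the linear-independence hypotheses, the case $\dim\Gamma = 2$ is eliminated and $\dim\Gamma = 3$ follows.
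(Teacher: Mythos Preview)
Your overall strategy matches the paper's proof: reduce to excluding $\dim\Gamma=2$, study the kernel $V$ of the projection $\Gamma\to\Gm$, pin down $V$ and then $\Gamma$ explicitly, and read off a defining polynomial that contradicts the linear-independence hypothesis via $\GaP$.

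One point where your plan diverges from what actually works: you propose to determine $V$ by ``matching the semilinear relations $\Psi^{(-1)}=\Phi\Psi$'' and compatibility with the $\sigma$-action. In the paper the determination of $V$ is purely group-theoretic and has nothing to do with $\sigma$. The mechanism is the conjugation action of $\Gm$ on $V$ coming from the exact sequence $1\to V\to\Gamma\to\Gm\to1$: one first shows $V(\Fb)\cap W_1(\Fb)=1$ (the $\Gm$-weights on $W_1$ are nontrivial, so a finite $\Gm$-stable subset is trivial), and then for $X\in V(\Fb)$ computes $(r.X)X^{-r}\in V(\Fb)\cap W_1(\Fb)=1$ for suitable integers $r$. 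This yields $\tfrac{r-1}{2}x^2=(r-1)y$; for $p=2$ a choice $r\equiv3\pmod4$ forces $x=0$ and hence $V=1$, already a contradiction (this is why no extra hypothesis is needed when $p=2$), while for $p\ge3$ a choice $r\not\equiv0,1\pmod p$ gives $y=x^2/2$ and hence $V=\{\,y=x^2/2\,\}$. From there one lifts an infinite-order $a_0\in\Gm(F)$ to $\Gamma(\Fs)$, normalizes its $x$-coordinate to zero using $V$, and takes the Zariski closure of its powers together with $V$ to obtain $\Gamma_{\Fb}$ cut out by $Q=2X_{31}-c_0(X_{22}^2-1)-X_{32}^2$; Galois descent (not Frobenius descent) shows $c_0\in\Fq(t)$, and $Q(\Psit)=0$ unwinds to exactly the forbidden $\kb(t)$-linear relation between $(\Omega^n\lan)^2-2\Omega^{2n}\laan-c_0$ and $\Omega^{2n}$.
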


\begin{proof}
Let $\Fb$ be a fixed algebraic closure of $F := \Fq(t)$ and $\Fs$ the separable closure of $F$ in $\Fb$.
For a scheme $S$ over $F$, we write $S_{\Fb}$ for its base extension to $\Fb$.
We set
$$\Phi' :=
\begin{bmatrix} (t-\theta)^{n} & \\
\alpha^{(-1)} (t-\theta)^{n} & 1 \\ \end{bmatrix} \in \GL_{2}(\kb(t))$$
and
$$\Psi' :=
\begin{bmatrix} \Omega^{n} & \\
\Omega^{n} \lan & 1 \\ \end{bmatrix} \in \GL_{2}(\LL).$$
Let $M'$ be the pre-$t$-motive defined by $\Phi'$ and $\Gamma'$ its fundamental group.
Let $\Cn$ be the $n$-th tensor power of the Carlitz motive,
which is the pre-$t$-motive defined by the $1 \times 1$ matrix $\begin{bmatrix} (t-\theta)^{n} \end{bmatrix}$.
Clearly, the matrix $\begin{bmatrix} \Omega^{n} \end{bmatrix}$ is a rigid analytic trivialization of $\begin{bmatrix} (t-\theta)^{n} \end{bmatrix}$.
Then we have homomorphisms of pre-$t$-motives
$$M \twoheadrightarrow M'; \ (x_{1}, x_{2}, x_{3}) \mapsto (x_{2}, x_{3})$$
and
$$\Cn \hookrightarrow M'; \ x \mapsto (x, 0).$$
By the Tannakian duality and Theorem \ref{dim=tdeg}, we have a diagram of smooth group schemes over $F$
\[\xymatrix{
\Gamma \ar@{>>}[r]^{\psi} & \Gamma' \ar@{>>}[r]^{\pi'} & \Gamma_{\Cn} & \\
\GaP \ar[u]^{\simeq} & \Gamma_{\Psi'} \ar[u]^{\simeq} & \Gamma_{\Omega^{n}} \ar[u]^{\simeq} \ar@{=}[r] & \Gm. \\
}\]
In the following, we identify the upper group schemes with the lower group schemes in the above diagram.
At first, we describe the morphisms $\psi$ and $\pi'$ in the above diagram explicitly.
By the definition of $\Psit = (\Psit_{ij})_{ij}$, we have the relations
$\Psit_{11} = \Psit_{22}^{2}$, $\Psit_{22} = \Omega^{-n} \otimes \Omega^{n}$, $\Psit_{33} = 1$,
$\Psit_{21} = \Psit_{22} \Psit_{32}$, $\Psit_{32} = 1 \otimes \Omega^{n} \lan - \lan \otimes \Omega^{n}$,
$\Psit_{31} = (\lan^{2} - \laan) \otimes \Omega^{2n} - \lan \otimes \Omega^{2n} \lan + 1 \otimes \Omega^{2n} \laan$,
and $\Psit_{ij} = 0$ if $i < j$.
Thus we have the inclusion
$$\Gamma \cong \GaP \subset G := \left \{ \begin{bmatrix}
a^{2} & & \\
a x & a & \\
y & x & 1
\end{bmatrix} \right \},$$
where we use the letters $a, x, y$ as coordinate variables.
Similarly, we obtain
$$\Gamma' \cong \Gamma_{\Psi'} \subset G' := \left \{ \begin{bmatrix}
a & \\
x & 1
\end{bmatrix} \right \}.$$
Since $\dim \Gamma_{\Psi'} = \dim_{k} \langle \pit^{n}, \lan(\theta) \rangle_{k} = 2 = \dim G'$ and $G'$ is irreducible and reduced,
we have the equality $\Gamma_{\Psi'} = G'$.
By using the above identifications, we can write
$$\psi \colon \Gamma \to \Gamma'; \ \begin{bmatrix} a^2 & & \\ a x & a & \\ y & x & 1 \end{bmatrix} \mapsto \begin{bmatrix} a & \\ x & 1 \end{bmatrix}$$
and
$$\pi' \colon \Gamma' \to \Gm; \ \begin{bmatrix} a & \\ x & 1 \end{bmatrix} \mapsto a.$$
We set $\pi := \pi' \circ \psi$, $V := \Ker \pi$ and $V' := \Ker \pi'$.
Then we have
$$V \subset W := \left \{ \begin{bmatrix} 1 & & \\ x & 1 & \\ y & x & 1 \end{bmatrix} \right \}, \ \
V' = \left \{ \begin{bmatrix} 1 & \\ x & 1 \end{bmatrix} \right \} \cong \Ga$$
and obtain the following diagram
\[\xymatrix{
1 \ar[r] & W \ar[r] & G \ar[r] & \Gm \ar[r] \ar@{=}[d] & 1 \\
1 \ar[r] & V \ar@{^{(}->}[u] \ar[r] \ar[d]^{\psi|_{V}} & \Gamma \ar@{^{(}->}[u] \ar[r]^{\pi} \ar@{>>}[d]^{\psi} & \Gm \ar[r] \ar@{=}[d] & 1 \\
1 \ar[r] & V' \ar[r] & \Gamma' \ar[r]^{\pi'} & \Gm \ar[r] & 1 \\
}\]
which is commutative and whose rows are exact.
Clearly $\psi|_{V}$ is surjective.
The group scheme $V$ is smooth over $F$.
Indeed, take $\az \in \Gm(\Fb) \smallsetminus \Gm(\overline{\Fq})$ and its lift $\azt \in \Gamma(\Fb)$.
Let $T \subset \Gamma_{\Fb}$ be the Zariski closure of the group generated by $\azt$.
Then $T$ is a rank one torus and isomorphic to $\Gm_{,\Fb}$ via $\pi$.
In particular, $\Lie(\pi_{\Fb}) \colon \Lie(\Gamma_{\Fb}) \to \Lie(\Gm_{,\Fb})$ is nonzero.
Since $\Gamma_{\Fb}$ and $\Gm_{,\Fb}$ are smooth over $\Fb$,
we have $\dim V_{\Fb} = \dim \Lie(V_{\Fb})$,
and hence $V$ is smooth over $F$.

If $\dim \Gamma = 3$, then there is nothing to prove.
Thus we suppose that $\dim \Gamma = 2$ and hence $\dim V = 1$.
Then $\Ker (\psi|_{V} \colon V(\Fb) \to V'(\Fb)) = V(\Fb) \cap W_{1}(\Fb)$ has dimension zero, where we set
$$W_{1} := \left \{ \begin{bmatrix} 1 & & \\ & 1 & \\ y & & 1 \end{bmatrix} \right \}.$$
In view of the above short exact sequence, we let $\Gm(\Fb)$ act on $V(\Fb)$ by
$a.X := \widetilde{a}^{-1} X \widetilde{a}$ for $a \in \Gm(\Fb)$ and $X \in V(\Fb)$,
where $\widetilde{a} \in \Gamma(\Fb)$ is a lift of $a$.
In term of matrices, this action is given by
$$a . \begin{bmatrix} 1 & & \\ x & 1 & & \\ y & x & 1 \end{bmatrix} = \begin{bmatrix} 1 & & \\ a x & 1 & & \\ a^{2} y & a x & 1 \end{bmatrix}.$$
Since $V(\Fb) \cap W_{1}(\Fb)$ is closed under this action, we conclude that $V(\Fb) \cap W_{1}(\Fb) = 1$.
For a matrix
$$X = \begin{bmatrix} 1 & & \\ x & 1 & & \\ y & x & 1 \end{bmatrix} \in V(\Fb),$$
we have
$$X^{r} = \begin{bmatrix} 1 & & \\ r x & 1 & & \\ \frac{(r-1) r}{2} x^{2} + r y & r x & 1 \end{bmatrix} \in V(\Fb)$$
for each integer $r$.
Thus if $r$ is not divisible by $p$, we obtain
$$V(\Fb) \ni (r . X) X^{-r} = \begin{bmatrix} 1 & & \\ 0 & 1 & & \\ r((r-1) y - \frac{r-1}{2} x^{2}) & 0 & 1 \end{bmatrix} \in W_{1}(\Fb).$$
Since $V(\Fb) \cap W_{1}(\Fb) = 1$, we have the relation $$\frac{r-1}{2} x^{2} = (r-1) y.$$
When $p = 2$, if we take $r \equiv 3 \mod 4$, then we have $x^{2} = 2 y = 0$, and thus $x = 0$.
This means $X \in V(\Fb) \cap W_{1}(\Fb) = 1$, therefore $V(\Fb) = 1$.
This is a contradiction.
Thus when $p = 2$, we always have $\dim \Gamma = 3$.
In the following, we assume that $p \geq 3$.
In this case, if we take $r \not \equiv 0, 1 \mod p$, then we have the relation $y = \frac{x^{2}}{2}$.
Since $\dim V = 1$, we conclude that
$$V = \left \{ \begin{bmatrix} 1 & & \\ x & 1 & \\ \frac{x^{2}}{2} & x & 1 \end{bmatrix} \right \}.$$
Next, we determine the group scheme $\Gamma$.
Fix an element $\az \in \Gm(F)$ which has infinite order and set $\overline{\az} \in \Gm(\Fb)$ to be the geometric point above $\az$.
Since the geometric fiber $\Gamma_{\overline{\az}}$ of $\pi$ over $\overline{\az}$ is a $V_{\Fb}$-torsor,
it is isomorphic to $V_{\Fb}$ which is smooth over $\Fb$.
Thus the fiber $\Gamma_{\az}$ is smooth over $F$.
By \cite[Chapter 3, Proposition 2.20]{Liu}, we have $\Gamma_{\az}(\Fs) \neq \emptyset$, and hence we can take a lift $\azt$ of $\az$ in $\Gamma(\Fs)$.
Since $\Gamma(\Fs)$ contains $V(\Fs)$, we can eliminate the $x$-coordinate of $\azt$.
Thus we may assume that
$$\azt = \begin{bmatrix} \az^{2} & & \\ & \az & \\ y_{0} & & 1 \end{bmatrix} \in \Gamma(\Fs).$$
Then for each $r \in \Z$, we have
$$\azt^{r} = \begin{bmatrix} \az^{2 r} & & \\ & \az^{r} & \\ \frac{y_{0}}{\az^{2} - 1} (\az^{2 r} - 1) & & 1 \end{bmatrix} \in \Gamma(\Fs).$$
Since $\az$ has infinite order, we have
$$\left \{ \begin{bmatrix} a^{2} & & \\ & a & \\ \frac{\cz}{2}(a^{2} - 1) & & 1 \end{bmatrix} \right \} \subset \Gamma_{\Fb},$$
where $\cz := \frac{2 y_{0}}{\az^{2} - 1} \in \Fs$.
Since $\Gamma_{\Fb}$ is a two-dimensional irreducible reduced group scheme which also contains $V_{\Fb}$,
we conclude that
$$\Gamma_{\Fb} = \left \{ \begin{bmatrix} a^{2} & & \\ a x & a & \\ \frac{\cz}{2}(a^{2} - 1) + \frac{x^{2}}{2} & x & 1 \end{bmatrix} \right \}.$$
We set a polynomial
$$Q := 2 X_{31} - \cz(X_{22}^{2} - 1) - X_{32}^{2} \in \Fs[X_{22}, X_{31}, X_{32}] \subset \Fs[X_{11}, \dots, X_{33}, 1/\det X].$$
Then $\Gamma_{\Fb} = \Spec \Fb[X_{22}, X_{31}, X_{32}, X_{22}^{-1}] / (Q)$.
Since $\Gamma$ is defined over $F$, the ideal $(Q)$ is stable under the action of $\Gal(\Fs/F) = \Aut(\Fb/F)$.
Therefore for each $\sigma \in \Gal(\Fs/F)$, we can write $\sigma(Q) = P_{\sigma} Q$ for some $P_{\sigma} \in \Fb[X_{22}, X_{31}, X_{32}, X_{22}^{-1}]$.
By comparing the degree of each variables, $P_{\sigma}$ must be a constant.
Comparing the both sides again, we have $P_{\sigma} = 1$ and $\sigma(\cz) = \cz$ for each $\sigma \in \Gal(\Fs/F)$.
Hence we have $\cz \in F$ and $Q \in F[X_{11}, \dots, X_{33}, 1/\det X]$.
Since $Q \equiv 0$ on the reduced scheme $\Gamma_{\Fb}$, we have $Q(\Psit) = 0$.
By the definition of $\Psit$, this is equivalent to the equality
$$((\Omega^{n} \lan)^{2} - 2 \Omega^{2n} \laan - \cz) \otimes \Omega^{2n} = \Omega^{2n} \otimes ((\Omega^{n} \lan)^{2} - 2 \Omega^{2n} \laan - \cz)$$
in $\LL \otimes_{\kb(t)} \LL$.
However, this is a contradiction to the assumption about the linear independence.
\end{proof}

\begin{proof}[Proof of Theorem \ref{main}]
We fix an ``odd'' integer $n \geq 1$ and set $\alpha := H_{n-1}$.
Since $\pit^{n} \not \in \kinf$ and $\lan(\theta) = \Gamma_{n} \zeta(n) \in \kinf^{\times}$, they are linearly independent over $k$.
If $(\Omega^{n} \lan)^{2} - 2 \Omega^{2n} \laan - c$ and $\Omega^{2n}$ are linearly independent over $\kb(t)$ for each $c \in \Fq(t)$,
we have that $\pit$, $\zeta(n)$ and $\zeta(n,n)$ are algebraically independent over $\kb$ by Theorem \ref{L-main}.
Suppose that $(\Omega^{n} \lan)^{2} - 2 \Omega^{2n} \laan - c$ and $\Omega^{2n}$ are linearly dependent over $\kb(t)$ for some $c \in \Fq(t)$.
Then there exists an element $f \in \kb(t)$ such that $(\Omega^{n} \lan)^{2} - 2 \Omega^{2n} \laan - c = f \Omega^{2n}$.
Since $c$ and $f$ are rational functions, these have no poles nor zeros at $t = \theta^{q^{N}}$ for some $N \geq 0$.
Then we have
$$\left ( \frac{\Gamma_{n} \zeta(n)}{\pit^{n}} \right )^{2 q^{N}} - 2 \left ( \frac{\Gamma_{n} \Gamma_{n} \zeta(n,n)}{\pit^{2n}} \right )^{q^{N}} - c(\theta^{q^{N}}) = 0.$$
Thus we obtain the relation
$$\zeta(n)^{2} / \pit^{2n} - 2 \zeta(n,n) / \pit^{2n} = c(\theta) / \Gamma_{n}^{2} \in k^{\times}.$$

Now we assume that $2n$ is ``odd''.
Then the left hand side of the above equation is contained in $\pit^{-2n} \cdot \kinf$.
However $\pit^{2n} \not \in \kinf$ by the definition of $\pit$.
This is a contradiction.
Thus the linear independence holds in this case.
\end{proof}

\begin{rema}
In the proof of Theorem \ref{main}, we used Theorem \ref{lin-alg}
to show the algebraic independence of $\pit$ and $\zeta(n)$ over $\kb$ for an ``odd'' integer $n \geq 1$
(this is equivalent to $\dim \Gamma' = 2$ by Theorem \ref{dim=tdeg}).
However, by using similar arguments to determine the dimension of $\Gamma$, we can show that $\dim \Gamma' = 2$ directly.
This gives a simple proof of the algebraic independence of $\pit$ and $\zeta(n)$ over $\kb$, which is proved in \cite{ChYu}.
The proof is as follows:
Assume that $\dim \Gamma' = 1$.
Then $\dim V' = 0$, and hence $V' = 1$ since $\Gm$ acts on $V'$.
As before, we obtain
$$\Gamma' = \left \{ \begin{bmatrix} a & \\ \cz(a - 1) & 1 \end{bmatrix} \right \}$$
for some $\cz \in \Fq(t)$.
This implies the equation
$$(\Omega^{n} \lan + \cz) \otimes \Omega^{n} = \Omega^{n} \otimes (\Omega^{n} \lan + \cz)$$
in $\LL \otimes_{\kb(t)} \LL$.
Therefore if $\Omega^{n} \lan + c$ and $\Omega^{n}$ are linearly independent over $\kb(t)$ for each $c \in \Fq(t)$,
we have a contradiction, and hence $\dim \Gamma' = 2$.
When $q-1$ does not divide $n$ and $\alpha = H_{n-1}$, this condition is satisfied.
\end{rema}

\end{document}